\documentclass[a4paper]{amsart}

\usepackage{mathtools}
\usepackage{amsmath}
\usepackage{amsthm}
\usepackage{amssymb}
\usepackage{dsfont}
\usepackage[english]{babel}
\usepackage[utf8x]{inputenc}
\usepackage{enumerate}
\usepackage{pdflscape}
\usepackage{float}
\usepackage{hyperref}
\usepackage{todonotes}
\usepackage{overpic}
\usepackage{tikz}
\usetikzlibrary{decorations.markings,decorations.pathreplacing}
\setlength\parskip{2.5pt}

\newtheoremstyle{pl}
{3pt}
{3pt}
{\itshape}
{}
{\scshape}
{.}
{.5em}
{}

\newtheoremstyle{pl*}
{3pt}
{3pt}
{\itshape}
{}
{\bfseries}
{.}
{.5em}
{}

\newtheoremstyle{pl**}
{3pt}
{3pt}
{\itshape}
{}
{\bfseries}
{.}
{.5em}
{}

\newtheoremstyle{mythm}
{3pt}
{3pt}
{\itshape}
{}
{\bfseries}
{.}
{.5em}
{\thmnote{#1 }#3}

\newtheoremstyle{myappendix}
{3pt}
{3pt}
{\itshape}
{}
{\scshape}
{.}
{.5em}
{\thmnote{#1 }#3}

\newtheoremstyle{df}
{3pt}
{3pt}
{\normalfont}
{}
{\scshape}
{.}
{.5em}
{}

\newtheoremstyle{rm}
{3pt}
{3pt}
{\normalfont}
{}
{\scshape}
{.}
{.5em}
{}

\theoremstyle{pl}
\newtheorem{thm}{Theorem}[section]
\newtheorem{lem}[thm]{Lemma}			

\newtheorem{pro}[thm]{Proposition}

\newtheorem*{qn*}{Question}
\newtheorem*{lem*}{Lemma}

\theoremstyle{pl*}
\newtheorem{thm*}{Theorem}
\newtheorem{pro*}[thm*]{Proposition}
\newtheorem{cor*}[thm*]{Corollary}

\theoremstyle{pl**}
\newtheorem*{thm**}{Main Theorem}

\theoremstyle{mythm}

\theoremstyle{myappendix}

\theoremstyle{df}
\newtheorem*{dfn}{Definition}
\newtheorem{ex}[thm]{Example}

\theoremstyle{rm}
\newtheorem{rmk}[thm]{Remark}


\newcommand{\mc}[1]{
\mathcal{#1}
}
\newcommand{\mb}[1]{
\mathbb{#1}
}

\begin{document}

\title{Hyperbolic Heegaard splittings and Dehn twists}

\author[Feller]{Peter Feller}
\address{Universit\'e de Neuch\^atel, Switzerland}
\email{peter.feller@math.unine.ch}

\author[Sisto]{Alessandro Sisto}
	\address{Department of Mathematics, Heriot-Watt University, Edinburgh, UK}
	\email{a.sisto@hw.ac.uk}

\author[Viaggi]{Gabriele Viaggi}
\address{Department of Mathematics, University of Pisa, Italy}
\email{gabriele.viaggi@unipi.it}

\subjclass[2020]{57K32}

\begin{abstract}
We consider the family of Heegaard splittings of genus $g$ at least two which are defined via a gluing map that is the $n$-th power of the Dehn twist along a curve that satisfies a natural topological assumption, namely pared acylindricity. We show that if $n$ is at least 14, then the Heegaard splitting has a hyperbolic metric for which the simple closed curve defining the Dehn twist is a closed geodesic of length at least $1.24/(n^2g)$ and at most $37.5/n^2$.
\end{abstract}

\maketitle

\section{Introduction}

A Heegaard splitting is a closed orientable 3-manifold 
\[
M_f=H\cup_{f:\partial H\to\partial H}H
\] 
obtained by gluing two copies of a
handlebody $H$ along an orientation preserving diffeomorphism of the boundary $f:\partial H\to\partial H$. A classical result due to Heegaard states that every closed connected orientable 3-manifold is diffeomorphic to some $M_f$. A general, challenging problem is to convert the topological and dynamical information about the gluing map $f$ into topological and geometric properties of $M_f$ (most relevantly for this paper; see e.g.~\cite{Hempel:Heegaard,twist-distance} for the topological side, and \cite{Na05,NS09,FSVa,HV,HJ22} for the geometric side).

In this article, we consider gluing maps $f$ that are some power of a Dehn twist along a simple closed curve $\gamma\subset\partial H$, and specifically we prove the following.

\begin{thm**}
Let $H$ be a handlebody of genus $g\geq 2$. Let $\gamma\subset\partial H$ be an essential simple closed curve such that $(H,\gamma)$ is {\rm pared acylindrical}. Let $\tau_\gamma$ be a (left or right) Dehn twist along $\gamma$.
If $n\ge 14$, then $M_{\tau_\gamma^n}$ admits a hyperbolic metric such that the curve $\gamma\subset M_{\tau_\gamma^n}$ is a geodesic and its length $\ell_{M_{\tau_\gamma^n}}(\gamma)$ satisfies
\[
\frac{1.24}{gn^2}<\ell_{M_{\tau_\gamma^n}}(\gamma)
<\frac{37.5}{n^2}.
\]
\end{thm**}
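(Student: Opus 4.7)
The plan is to realize $M_{\tau_\gamma^n}$ as a hyperbolic Dehn filling of a single fixed complete cusped hyperbolic 3-manifold built from $(H,\gamma)$, and then apply quantitative Dehn filling estimates. The pared acylindricity hypothesis is precisely what is needed to produce the parent manifold via Thurston's hyperbolization theorem for pared 3-manifolds: the interior of $H$ admits a complete geometrically finite hyperbolic structure with a single rank-one cusp at $\gamma$, which we may arrange to have totally geodesic convex core boundary of a chosen conformal type. Doubling two copies $H_1,H_2$ along their convex core boundaries produces a finite-volume hyperbolic 3-manifold $DH$, homeomorphic to $(H\cup_{\mathrm{id}}H)\setminus\gamma$, with a single torus cusp $T$ at $\gamma$ and natural meridian-longitude basis $(\mu,\lambda)$, where $\mu$ bounds a disk in the tubular neighborhood of $\gamma$ in $H\cup_{\mathrm{id}}H$ and $\lambda$ is homotopic to $\gamma$.

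A standard untwisting argument then identifies $M_{\tau_\gamma^n}=DH(\mu+n\lambda)$: twisting the gluing by $\tau_\gamma^n$ changes the meridian disk by $n$ Dehn twists around $\lambda$, which is exactly the effect of changing the filling slope by $n$ meridians. On the maximal horospherical cross-section of $T$, the slope $\mu+n\lambda$ has length growing linearly in $n$, with implicit constants controlled by the intrinsic Euclidean geometry of the cusp torus.

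Once this length exceeds a universal threshold (Gromov--Thurston's $2\pi$, or the sharper Hodgson--Kerckhoff value $\approx 7.583$), Dehn filling produces a hyperbolic metric on $M_{\tau_\gamma^n}$ in which the filled core, namely $\gamma$, is a short closed geodesic with length comparable to $2\pi^2/\ell_T(\mu+n\lambda)^2$. This explains the $1/n^2$ scaling in both bounds. The upper bound $34.3/n^2$ then follows from a universal lower bound on the systole of the maximal cusp cross-section of $DH$ (via the Margulis lemma, which is $g$-independent). The lower bound $0.7/(g^2n^2)$ instead requires bounding the cusp area from above by $O(g)$: using Meyerhoff's inequality $\mathrm{area}(T)\leq (4/\sqrt 3)\,\mathrm{vol}(DH)$ together with a bound $\mathrm{vol}(DH)=O(g)$ coming from Gromov's norm applied to the topology of $\partial H\setminus\gamma$, one obtains $\ell_T(\mu+n\lambda)=O(g\,n)$, hence $\ell(\gamma)\gtrsim 1/(g^2n^2)$.

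The main obstacle is converting the qualitative picture above into the explicit numerical constants $14$, $0.7$ and $34.3$. The existence of the hyperbolic structure and the $1/n^2$ scaling follow from classical hyperbolic Dehn surgery, but the sharp small threshold $n\geq 14$ and the concrete coefficients demand precise forms of the Hodgson--Kerckhoff length deformation estimates on the maximal cusp, combined with an effective choice of hyperbolic structure on the pared handlebody for which the cusp systole and cusp area are controlled explicitly as a function of $g$. I expect the technical backbone of the paper to consist in supplying exactly these effective inputs.
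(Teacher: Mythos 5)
Your high-level skeleton is the same as the paper's: drill $\gamma$ from the Heegaard splitting, observe that the complement is the double of $H-\gamma$, hyperbolize via Thurston's pared hyperbolization, identify the correct filling slope as $n\alpha+\beta$ on the resulting cusp torus, and feed a normalized-length bound into the Hodgson--Kerckhoff universal filling theorem (the paper uses the refined version with threshold $7.823$ from Futer--Purcell--Schleimer). However, you have not supplied---and your sketch would not produce---the two explicit cusp estimates that are the real content of the theorem, and your stated reasons for the two length bounds are either incomplete or go down a different (and less effective) road than the paper.

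First, the paper's central technical input is a bound ${\rm Length}(\bar\beta)\le 5$ on the meridian on the maximal cusp torus, proved via a self-contained elementary lemma about concatenations of horospherical geodesics and orthogonal geodesic segments in $\mathbb H^3$ (Lemma~\ref{lem:2.5}), applied to a meridian disk of the handlebody. Your proposal has no analog of this. This gap matters for the upper bound $34.3/n^2$: writing $a,b$ for the cusp lengths of the longitude $\alpha$ and meridian $\beta$, one has $L(\mu_n)^2\ge n^2(a/b)-2n+(b/a)$, and the $n^2$ growth requires a lower bound on $a/b$. Your claim that a Margulis systole lower bound alone yields the upper bound is therefore not right: the systole only gives $a\ge 2r_\ep$, and without an upper bound on $b$ the ratio $a/b$ is not controlled. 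Both the Margulis constant and the bound $b\le 5$ are used.

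Second, for the lower bound on $\ell(\gamma)$ the paper does not go via Meyerhoff's cusp-area inequality and a Gromov-norm volume estimate. Instead it bounds $a$ directly by $2\pi(g-1)$ using the fact that the cusp neighborhoods $A$ in the hyperbolic surface $\partial H-\gamma$ have area equal to the length of their boundary, and the surface has total area $4\pi(g-1)$. Combined again with $b\le 5$ and the lower bound $2\sqrt3\,r_\ep^2$ on the cusp torus area (which also requires the Margulis-type input), this gives $L^2\le (2\pi n(g-1)+5)^2/(2\sqrt3\,r_\ep^2)$. Your proposed Meyerhoff/Gromov-norm route is at best a qualitatively plausible alternative, but it faces the same obstruction: an upper bound on cusp area does not by itself bound the length $a$ of the longitude unless the chosen basis $(\alpha,\beta)$ is known to be reasonably reduced, which is exactly the kind of control that the meridian bound $b\le 5$ provides. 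Finally, the explicit Margulis-type constant the paper uses ($\ln 3$ for $g\ge 3$, via a rank computation on $H_1(\mathbb M_\gamma;\mathbb Q)$ plus a result of Futer--Purcell--Schleimer/Culler--Shalen) is a load-bearing ingredient for the threshold $n\ge 14$ that your sketch does not address.
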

Pared acylindricity is a topological condition introduced by Thurston in the study of deformation spaces of hyperbolic metrics
. We provide the exact definition below, and at this point only note that it is satisfied when, in the curve graph of the boundary surface of the handlebody, the curve $\gamma$ is at distance at least 3 from the disk set of the handlebody; see~\cite{Hempel:Heegaard} for definitions of these terms. In fact, in the distance at least 3 case, the manifold $M_{\tau_\gamma^n}$ is hyperbolic for $n\geq 3$, by \cite[Lemma 5.3]{twist-distance} and in view of Thurston's hyperbolization and \cite{Hempel:Heegaard}, but no bounds were previously known on the length of $\gamma$.

The theorem is curated to please the eye. For a more precise, albeit more technical version, see Theorem~\ref{thm} below. 

In~\cite{FSVb}, we establish hyperbolicity and length bounds for a much larger class of Heegaard splittings using subsurface projections. The reasons why we deal with the case of powers of Dehn twists in this separate paper rather than as part of \cite{FSVb} are the more elementary nature of the arguments for the powers of Dehn twist case, which allows a short exposition of the key idea, and, more importantly, the fact that the constants in the present case are explicit. In the more general setup, the strategy cannot yield explicit constants; see also the last bullet point below.

We list some features of Theorem~\ref{thm}.
\begin{itemize}
\item{The lower bound on the power on the Dehn twist and the upper bound on the length of the geodesic are {independent} of the genus of the handlebody.}
\item{There are lots of simple closed curves $\gamma\subset\partial H$ that satisfy the assumption of Theorem~\ref{thm}. To make this precise, consider the space $\mc{PML}$ of projective measured laminations on $\partial H$. The space $\mc{PML}$ is homeomorphic to a sphere of dimension $6g-7$, where $g$ is the genus of $H$, and it is a certain completion of the space of simple closed curves on $\partial H$. There is an open dense subset $\mc{D}\subset\mc{PML}$ of full Lebesgue measure, the so-called Masur domain (see Lecuire~\cite{Le05}), such that $(H,\gamma)$ is pared acylindrical for every simple closed curve $\gamma\in\mc{D}$.
}
\item
In a separate article \cite{FSVb}, we combine the ideas of this article and of~\cite{FSVa} with 
 tools introduced by Masur and Minsky~\cite{MasurMinsky:I},~\cite{MasurMinsky:II}, Minsky~\cite{M10}, and Brock, Canary, and Minsky~\cite{BCM} around the solution of the Ending Lamination Conjecture to establish hyperbolicity and length bounds (as given by the Length Bound Theorem in \cite{BCM}) for a vast class of Heegaard splittings with large subsurface projections. The ideas presented here correspond to the most basic case of large annular projections.
\end{itemize}

\subsection*{Main ingredients of the proofs}
In order to prove our main theorem, we will regard $M_{\tau_\gamma^n}$ as obtained by Dehn filling from the double of $H-\gamma$, which is a hyperbolic manifold that we denote by $\mathbb M_\gamma$. By work of Hodgson-Kerckhoff \cite{HK:universal} and Futer-Purcell-Schleimer \cite{FPS22}, one can understand the geometry of $M_{\tau_\gamma^n}$ provided that one understands the geometry of the cusp torus; the precise version of this is stated below.
In our case, the manifold $\mathbb M_\gamma$ has a reflective isometry, which is the special feature of $M_{\tau_\gamma^n}$ compared to manifolds obtained using more complicated gluing maps. This symmetry forces two natural curves $\alpha$ and $\beta$ on the cusp torus, whose classes form a basis of its first homology, to admit orthogonal geodesic representatives. As a consequence, the normalized length of the filling slope can be computed exactly in terms of the lengths $a$ and $b$ of these two geodesics; see Subsection~\ref{subsec:normalized_length}. It thus remains to bound $a$ and $b$: a lower bound for both is provided by Adams' waist size theorem \cite{Adams_19_WaistSize} applied to the maximal cusp, while the upper bounds are the content of Proposition \ref{prop}. The upper bound for $a$ uses the Gauss-Bonnet theorem together with, to improve the estimate, a packing estimate for cusp neighborhoods on hyperbolic surfaces from \cite{Boroczky}. The upper bound for $b$ relies on the reflective isometry once more, together with a hyperbolic geometry lemma on a sequence of horospheres and geodesics, Lemma \ref{lem:2.5}.

\subsection*{Acknowledgements}
PF acknowledges financial support by the SNSF
Grant 181199. GV acknowledges financial support of the DFG grants 427903332 and 390900948.
We are most grateful for the very careful and detailed feedback of an anonymous referee, which improved the paper and the main result considerably. Concretely, they found a factor 2 error concerning injectivity radii that led to errors in the original argument, and gave multiple suggestions for improvements of our constants, including correctly calculating the exact constants in Theorem~\ref{thm}. 

\noindent\emph{AI disclosure.} We also acknowledge the use of Claude AI. For the second version of this paper, we uploaded the paper to Claude AI~\cite{claude2026} for a correctness check and independent verification of the numerical calculations. In the process it spotted an error in Example~\ref{ex:optimal} and proposed the solution to said error.

\section{A precise version of the theorem}

We recall the definition of pared acylindricity and state a refined, albeit more technical version of the Main Theorem.

\begin{dfn}[Pared Acylindrical~\cite{Thu86,Thu86(3)}]\label{def:pa}
Let $H$ be a 
handlebody of genus $g\ge 2$. Let $\gamma\subset\partial H$ be an essential simple closed curve. Let $A$ be a tubular neighborhood of $\gamma$ in $\partial H$. We say that $(H,\gamma)$ is {\em pared acylindrical} if
\begin{itemize}
\item{the inclusion $\partial H-\gamma\subset H$ is $\pi_1$-injective,}
\item{every essential map $(S^1\times[0,1],S^1\times\{0,1\})\to (H,A)$ is homotopic as a map of pairs into $A$, and}
\item{every essential map $(S^1\times[0,1],S^1\times\{0,1\})\to (H,\partial H-A)$ is homotopic as a map of pairs into $\partial H$.}
\end{itemize}
\end{dfn}

\begin{thm}\label{thm}
Let $H$ be a 
handlebody of genus $g\geq 2$. Let $\gamma\subset\partial H$ be an essential simple closed curve such that $(H,\gamma)$ is {\rm pared acylindrical}. Let $\tau_\gamma$ be a (left or right) Dehn twist along $\gamma$.

If $n\ge 14$, then $M_{\tau_\gamma^n}$ admits a hyperbolic metric for which the curve $\gamma\subset M_{\tau_\gamma^n}$ is a geodesic of length
\begin{equation}\label{eq:lengthestimate}
\frac{2\pi}{n^2\left(\frac{3(g-1)}{r}+\frac{r}{3}\right)+16.17}<\ell_{M_{\tau_\gamma^n}}(\gamma)
<\frac{2\pi}{n^2\frac{r}{2}+\frac{2}{r}-28.78},
\end{equation}
where $r=\frac{\sqrt[4]{2}}{2}=2^{-3/4}$.
\end{thm}

Theorem~\ref{thm} recovers the Main Theorem.

\begin{proof}[Proof of the Main Theorem]
For the lower bound, we must check that, for all $g\geq 2$ and $n\geq 14$,
\[\frac{1.24}{gn^2}<\frac{2\pi}{n^2\left(\frac{3(g-1)}{r}+\frac{r}{3}\right)+16.17},\]
which is equivalent to
$1.24\left(\frac{3(g-1)}{r}+\frac{r}{3}+\frac{16.17}{n^2}\right)<2\pi g$.
Using $n\geq 14$, we have $\frac{r}{3}+\frac{16.17}{n^2}<0.199+0.083=0.282$, so it suffices to note
\[1.24\cdot\frac{3(g-1)}{r}+1.24\cdot 0.282<6.257(g-1)+0.35<2\pi g,\]
where the last inequality holds since $6.257<2\pi$.

For the upper bound, note that
$\frac{2\pi}{n^2\frac{r}{2}+\frac{2}{r}-28.78}<\frac{37.5}{n^2}$
is equivalent to
\[n^2\left(37.5\cdot\frac{r}{2}-2\pi\right)>37.5\left(28.78-\frac{2}{r}\right),\]
which, since $37.5\cdot\frac{r}{2}-2\pi>4.865$ and $37.5\left(28.78-\frac{2}{r}\right)<953.12$, holds whenever $n^2\geq 196>\frac{953.12}{4.865}$, i.e.~whenever $n\geq 14$.
Thus, the Main Theorem is implied by Theorem~\ref{thm}.
\end{proof}
In the rest of this section, we prove Theorem~\ref{thm}.
\subsection{Hyperbolization of the drilled double}
\label{subsec:hyperbolization}
Observe that $M_{\tau_{\gamma}^n}-\gamma$ is (homeomorphic to) the double of $H-\gamma$.
By Thurston's Hyperbolization for Haken manifolds (\cite[Theorem~1.42]{Kapovich:hyperbolization}), as $(H,\gamma)$ is pared acylindrical, the manifold $M_{\tau_{\gamma}^n}-\gamma$ admits a complete finite volume hyperbolic metric which is itself the double of a complete hyperbolic metric with finite volume and totally geodesic boundary on $H-\gamma$ (see also~\cite[Theorem~2]{BonahonOtal_04}). We fix such a hyperbolic structure on $M_{\tau_{\gamma}^n}-\gamma$ and denote it by $\mb{M}_\gamma$. This hyperbolic structure is unique up to isometry by Mostow--Prasad rigidity (and, likewise, the hyperbolic structure with totally geodesic boundary on $H-\gamma$ is unique up to isometry, as can be seen by doubling and applying Mostow--Prasad rigidity).

Since the metric on $\mb{M}_\gamma$ is the double of a metric on $H-\gamma$, the \emph{doubling involution} $\sigma\colon\mb{M}_\gamma\to\mb{M}_\gamma$, which exchanges the two copies of $H-\gamma$ and fixes their common boundary pointwise, is an isometry. This reflective symmetry will be used twice below, in Subsection~\ref{subsec:normalized_length} and in the proof of Proposition~\ref{prop}.

\subsection{The cusp and its waist}
\label{subsec:the_cusp}
The hyperbolic structure $\mb{M}_\gamma$ has one (rank $2$) \emph{cusp}, i.e.~there exists an open subset $\mb{T}(\gamma)$ of $M_{\tau_{\gamma}^n}-\gamma$ that is isometric to a quotient of a horoball $\mc{O}\subset \mb{H}^3$ by a discrete torsion free group of isometries isomorphic to $\mb{Z}^2$ that stabilize $\mc{O}$.

By ``expanding the cusp until it first touches itself on its boundary'', i.e. by choosing $\mb{T}(\gamma)$ maximal with respect to inclusion, we know that the shortest non-null homotopic curve in the flat boundary torus $\partial \mb{T}(\gamma)$ has length at least $\sqrt[4]{2}$ by work of Adams~\cite{Adams_19_WaistSize}. Here we use that Adams' theorem applies to $\mb{M}_\gamma$: the theorem allows for exactly three exceptional manifolds with a cusp of waist size at most $\sqrt[4]{2}$, namely the figure-eight knot complement, the $5_2$ knot complement, and the manifold obtained by $(2,1)$-surgery on one component of the Whitehead link. None of them is isometric to $\mb{M}_\gamma$. Indeed, each of the three exceptional manifolds has first Betti number $1$, whereas $\mb{M}_\gamma$ is the complement of a knot in the double of $H$, which is diffeomorphic to $\#_g(S^2\times S^1)$; hence $b_1(\mb{M}_\gamma)\geq g\geq 2$.

We set
\[r\coloneqq\frac{\sqrt[4]{2}}{2}=2^{-3/4},\]
so that every essential closed curve on $\partial\mb{T}(\gamma)$ has length at least $2r$.

As $\mb{M}_\gamma$ is the double of $H-\gamma$, the cusp $\mb{T}(\gamma)$ is itself the double of $U=\mb{T}(\gamma)\cap (H-\gamma)$. The boundary of $U$ decomposes as a Heegaard surface part $A=U\cap(\partial H-\gamma)$, which is the union $A=A_1\cup A_2$ of two cuspidal neighborhoods of the ends of the finite area hyperbolic surface $\partial H-\gamma$, and a handlebody part $\partial U-A$ which is a properly embedded annulus in $H-\gamma$. 

The boundary of $\mb{T}(\gamma)$ is the double of the annulus $\partial U-A$. We choose and fix for the rest of the paper two simple closed curves in  $\partial\mb{T}(\gamma)$ whose homology classes form a basis for the first integer homology of $\partial\mb{T}(\gamma)$:
\begin{itemize}
\item{The curve $\alpha$ is a component of $\partial A$.}
\item{The curve $\beta$ is the double of an arc $\kappa$ joining the two boundary components of the annulus $\partial U-A$. Note that the homology class of $\beta$ does not depend on the choice of the arc.}
\end{itemize}
See Figure~\ref{fig:cusp_torus} for an illustration.

\begin{figure}[h]
    \begin{overpic}[scale=2]{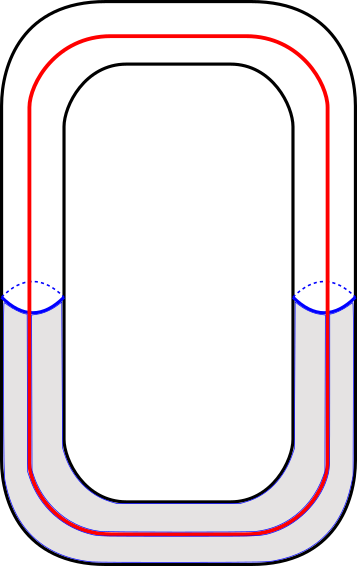}
    \put(-20,20) {$\partial U-A$}
    \put(-25,80) {$\sigma(\partial U-A)$}
    \put(-5,45) {{\color{blue} $\alpha$}}
    \put(65,50) {{\color{red} $\beta=\kappa\cup\sigma(\kappa)$}}
    \end{overpic}
    \caption{The flat torus $\partial\mb{T}(\gamma)$ is symmetric with respect to the isometric involution $\sigma$. As explained in Subsection \ref{subsec:normalized_length}, the geodesic representatives of $\alpha$ and $\beta=\kappa\cup\sigma(\kappa)$ are orthogonal, of lengths $a$ and $b$ respectively.}\label{fig:cusp_torus}
\end{figure}

\subsection{Dehn Filling}
$\mb{T}(\gamma)\cup\gamma$ is an open solid torus and a tubular neighborhood of $\gamma$ in $M_{\tau_\gamma^n}$ and, hence, 
$M_{\tau_\gamma^n}$ is obtained from $\mb{M}_\gamma-\mb{T}(\gamma)$ by Dehn filling along a suitable slope. This means that we recover $M_{\tau_\gamma^n}$, up to diffeomorphism, as the result of gluing together  $\mb{M}_\gamma-\mb{T}(\gamma)$ with a standard solid torus $D^2\times S^1$ along a diffeomorphism $\phi_n:\partial(D^2\times S^1)\to\partial \mb{T}(\gamma)$. The diffeomorphism type of the resulting manifold is completely determined by the homotopy class of the curve $\mu_n=\phi_n(\partial D^2\times\{\star\})$ on the torus $\partial\mb{T}(\gamma)$. This homotopy class is called the \emph{slope} of the Dehn filling. In our case, the slope is the (unique up to orientation reversal) non-trivial homotopy class
$\mu_n\subset\partial\mb{T}(\gamma)$
given by curves that are null-homotopic in the solid torus $\mb{T}(\gamma)\cup\gamma$.

Homotopy classes of simple closed curves on a torus $\partial\mb{T}(\gamma)$ are parametrized by the primitive homology classes in $H_1(\partial\mb{T}(\gamma),\mb{Z})$. From here on, for curves on a torus, we do not distinguish between them, their homotopy classes and their homology classes in our notation.
 The slope defined by $\mu_n$ along which Dehn filling of $\mb{M}_\gamma$ yields $M_{\tau_\gamma^n}$ has, up to correct choice of orientations which we suppress, a simple expression in terms of the homology basis $\alpha,\beta$, namely
$\mu_n=n\alpha+\beta$.
To see this, note that if $n=0$, indeed $\mu_0=\beta$ since $\beta$ bounds a disc in the solid torus $\mb{T}(\gamma)\cup\gamma$.
For general $n$, $\mu_n$ is obtained from $\mu_0$ by applying $n$ Dehn twists along $\alpha$ as e.g.~argued in~\cite[Proof of Theorem 2]{lickorish62}; hence, $\mu_n=n\alpha+\mu_0=n\alpha+\beta$ as desired.

In order to prove our theorem, we apply the universal Dehn filling theorem of Hodgson and Kerckhoff \cite{HK:universal} to deform the hyperbolic structure of $\mb{M}_\gamma$ to a hyperbolic structure on $M_{\tau_\gamma^n}$. We use the following statement of the filling theorem obtained by combining the work of Hodgson and Kerckhoff and results of Futer, Purcell, and Schleimer. Recall that the {\em normalized length} of a curve $\mu\subset\partial\mb{T}(\gamma)$, where $\mb{T}(\gamma)$ is a cusp of some finite volume hyperbolic $3$-manifold, is defined by
\[
L(\mu):=\frac{{\rm Length}(\bar{\mu})}{\sqrt{{\rm Area}(\partial\mb{T}(\gamma))}}
\]
where $\bar\mu$ is a (flat) geodesic representative of $\mu$ on $\partial\mb{T}(\gamma)$ (the intrinsic metric of $\partial\mb{T}(\gamma)$ is locally isometric to a horosphere $\mc{H}$ in $\mb{H}^3$ which is isometric to $\mb{R}^2$).
\begin{thm}[{\cite[Theorem 1.1]{HK:universal} and \cite[Corollary 6.13]{FPS22}}]\label{thm:filling}
Let $\Gamma\subset M$ be a knot in a 3-manifold $M$. Suppose that $M-\Gamma$ has a complete finite volume hyperbolic metric for which the {\rm normalized length} $L$ of the meridian of $\Gamma$ is at least $7.823$. Then $M$ has a complete hyperbolic metric for which $\Gamma$ is a geodesic of length 
\[
\frac{2\pi}{L^2+16.17}<\ell_M(\Gamma)<\frac{2\pi}{L^2-28.78}.
\]
\end{thm}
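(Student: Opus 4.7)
The plan is to derive the theorem by stitching together the two cited results: Hodgson and Kerckhoff's universal Dehn filling theorem produces the hyperbolic structure on $M$ itself, while the Futer--Purcell--Schleimer refinement supplies the explicit two-sided estimate on $\ell_M(\Gamma)$. Since both are stated as black boxes in the references, the task is essentially a careful unpacking and comparison of hypotheses and conclusions.

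First, I would pass from the complete hyperbolic metric on $M-\Gamma$ to a smooth hyperbolic metric on $M$. For this I invoke \cite[Theorem~1.1]{HK:universal}, which considers the one-parameter family of hyperbolic cone manifolds interpolating between the complete cusped structure on $M-\Gamma$ (cone angle $0$ along $\Gamma$) and a smooth hyperbolic structure on $M$ (cone angle $2\pi$ along $\Gamma$). Hodgson and Kerckhoff show that this deformation can be carried out whenever the normalized meridian length in the complete structure exceeds a universal threshold. In the formulation used by Futer--Purcell--Schleimer, the relevant threshold is roughly $L\ge 7.5832$, which is comfortably below our hypothesis $L\ge 7.823$. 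This step outputs a complete hyperbolic metric on $M$ in which $\Gamma$ appears as a closed geodesic of some length $\ell_M(\Gamma)$.

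Second, I would install the quantitative length estimate. The heuristic behind the bound is Thurston's Dehn-surgery formula: to leading order, the core geodesic has complex length whose real part satisfies $\ell_M(\Gamma)\cdot L^2\approx 2\pi$. To turn this into a rigorous two-sided bound, one integrates the Hodgson--Kerckhoff differential inequality that controls $\tfrac{d\ell}{dt}$ along the cone-angle deformation, where $t$ is the natural ``tube radius'' or $\sin^2(\theta/2)$ parameter. The precise constants $16.17$ and $28.78$ arise from bounding the error terms in this integration; this is carried out in \cite[Corollary~6.13]{FPS22}, where Futer, Purcell, and Schleimer refine the Hodgson--Kerckhoff estimates by sharpening the monotonicity estimates on normalized length during the deformation. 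Plugging our hypothesis $L\ge 7.823$ into their corollary yields exactly
\[\frac{2\pi}{L^2+16.17}<\ell_M(\Gamma)<\frac{2\pi}{L^2-28.78}.\]

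The main obstacle---entirely absorbed into the cited references---is to control the geometric deformation throughout the cone-angle family. Specifically, one must show that the normalized length of the meridian stays above the HK threshold and the tube radius around $\Gamma$ stays bounded away from zero for the entire interval $\theta\in[0,2\pi]$. This requires the Hodge-theoretic analysis of infinitesimal deformations of hyperbolic cone structures developed by Hodgson--Kerckhoff, together with the quantitative sharpenings of \cite{FPS22}. Beyond quoting these two statements, no additional work is required, so the ``proof'' reduces to verifying that our hypothesis $L\ge 7.823$ falls safely within the regime where both theorems apply and then transcribing their conclusions.
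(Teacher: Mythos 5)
Your proposal is correct and matches the paper's treatment: the paper states this result purely as a citation, combining the existence of the filled hyperbolic structure from Hodgson--Kerckhoff's universal Dehn filling theorem with the explicit two-sided core-length estimate from Futer--Purcell--Schleimer, exactly as you describe. The only work needed is checking that $L\ge 7.823$ meets the hypotheses of both cited results, which you do.
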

To establish Theorem~\ref{thm}, we provide bounds on the normalized length of $\mu_n
$.

\subsection{Normalized length computation}
\label{subsec:normalized_length}
Let us work in the upper half space model $\mb{H}^3=\mb{C}\times(0,\infty)$. Denote by $\rho:\pi_1(\partial\mb{T}(\gamma))\to{\rm Isom}^+(\mb{H}^3)$ the holonomy homomorphism. We normalize the configuration so that the image fixes $\infty$. Under this assumption, each $\rho(\eta)$ is a parabolic transformation of the form $(z,t)\to(z+\tau(\eta),t)$, where $\tau:\pi_1(\partial\mb{T}(\gamma))\to\mb{C}$ is a group homomorphism, and we can identify $\mb{T}(\gamma)$ with the quotient by $\rho(\pi_1(\partial\mb{T}(\gamma)))$ of a horoball
\[
\mc{O}=\{(z,t)\in\mb{H}^3\left|\;t\ge T\right.\}
\]
whose boundary is the horosphere $\mc{H}=\mb{C}\times\{T\}$. Note that the metric on $\mc{H}$ coincides with the standard flat metric of $\mb{C}$ rescaled by $1/T$. Thus, the length of the flat geodesic representative of any $\eta\in\pi_1(\partial\mb{T}(\gamma))$ coincides with $|\tau(\eta)|/T$.

Observe that the geodesic representatives of $\alpha$ and $\beta$ are orthogonal. This is a consequence of the reflective symmetry of $\mb{M}_\gamma$, as follows. The doubling involution $\sigma$ from Subsection~\ref{subsec:hyperbolization} is an isometry of $\mb{M}_\gamma$; since the maximal cusp neighborhood $\mb{T}(\gamma)$ is canonical, $\sigma$ preserves $\mb{T}(\gamma)$ and restricts to an isometric involution of the flat torus $\partial\mb{T}(\gamma)$. By construction, the fixed point set of $\sigma$ on $\partial\mb{T}(\gamma)$ is the union of the two disjoint simple closed curves $\alpha_1\cup\alpha_2=\partial A$, both in the class of $\alpha$, and $\sigma$ exchanges the two components of their complement, namely the two copies of the annulus $\partial U-A$; see Figure~\ref{fig:cusp_torus}. Being components of the fixed point set of an isometry, $\alpha_1$ and $\alpha_2$ are closed flat geodesics. In other words, $\partial\mb{T}(\gamma)$ is the double, along its geodesic boundary, of the compact flat annulus $\partial U-A$. A compact flat annulus with geodesic boundary is isometric to $[0,c]\times\mb{R}/a\mb{Z}$ for some $c>0$, where $a={\rm Length}(\bar\alpha)$, so its double is the rectangular torus $\mb{R}/2c\mb{Z}\times\mb{R}/a\mb{Z}$. Moreover, the double $\beta=\kappa\cup\sigma(\kappa)$ of an essential arc $\kappa$ of the annulus is homologous to the circle $\mb{R}/2c\mb{Z}\times\{{\rm pt}\}$, since the $\alpha$-windings of $\kappa$ and of its mirror image $\sigma(\kappa)$ cancel. Hence the geodesic representative $\bar\beta$ of $\beta$ is orthogonal to $\bar\alpha$, as claimed. In the notation above, after normalizing so that $\tau(\alpha)\in\mb{R}$, we conclude that $\tau(\beta)\in i\mb{R}$.

We set $a\coloneqq{\rm Length}(\bar{\alpha})=\frac{|\tau(\alpha)|}{T}$ and $b\coloneqq{\rm Length}(\bar{\beta})=\frac{|\tau(\beta)|}{T}$. By orthogonality, the length of the flat geodesic representative of $\mu_n=n\alpha+\beta$ and the area of the torus can be computed exactly:
\begin{align*}
{\rm Length}(\bar\mu_n)^2 &= \frac{|\tau(n\alpha+\beta)|^2}{T^2} = \frac{n^2|\tau(\alpha)|^2+|\tau(\beta)|^2}{T^2} = n^2a^2+b^2\quad\text{and} \\
{\rm Area}(\partial\mb{T}(\gamma)) &= \frac{|\tau(\alpha)||\tau(\beta)|}{T^2} = ab.
\end{align*}
Therefore, the normalized length of $\mu_n$ is given by
\begin{equation}
\label{eq:bound}
L(\mu_n)^2=\frac{n^2a^2+b^2}{ab}=n^2\frac{a}{b}+\frac{b}{a}.
\end{equation}

As a consequence, we want to bound the lengths $a$ and $b$.
Since $\alpha$ and $\beta$ are essential in $\partial\mb{T}(\gamma)$, the waist size bound from Subsection~\ref{subsec:the_cusp} yields the lower bounds $a,b\geq 2r$. We will use the following upper bounds.

\begin{pro}\label{prop}
Let $H$ be a 
handlebody of genus $g\geq 2$. For every essential simple closed curve $\gamma\subset\partial H$ such that $(H,\gamma)$ is {pared acylindrical}, we have
\[{\rm Length}(\bar{\alpha})\leq 6 (g-1) \text{ and } {\rm Length}(\bar{\beta})\leq 4,\]
where $\bar{\alpha}$ and $\bar{\beta}$ are geodesic representatives  of the curves $\alpha,\beta\subset
\partial\mb{T}(\gamma)$ as defined in Subsection \ref{subsec:the_cusp}.
\end{pro}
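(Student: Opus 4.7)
My plan is to treat the two bounds separately, since they rely on rather different ideas.

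For $\bar{\alpha}$, the argument is Gauss--Bonnet applied to the totally geodesic surface $\partial H-\gamma$. The key point is that $\bar{\alpha}$, viewed on the flat torus $\partial\mathbb{T}(\gamma)$, is actually one of the two horocycles $\partial A_1,\partial A_2$ bounding the cuspidal neighborhoods of $\partial H-\gamma$, and its length is the same whether measured in the induced flat metric on the torus or in the intrinsic hyperbolic metric on the surface (both come from the ambient metric on $\mathbb{M}_\gamma$). Moreover, $\partial A_1$ and $\partial A_2$ are parallel simple closed geodesics on the flat torus in the same homology class, hence have equal length $\ell(\bar{\alpha})$. Using the standard fact that a cusp neighborhood of a hyperbolic surface has area equal to the length of its boundary horocycle and the Gauss--Bonnet identity $\operatorname{Area}(\partial H-\gamma)=-2\pi\chi(\partial H-\gamma)=4\pi(g-1)$, one then gets
\[
2\,\ell(\bar{\alpha}) \;=\; \operatorname{Area}(A_1)+\operatorname{Area}(A_2) \;<\; \operatorname{Area}(\partial H-\gamma) \;=\; 4\pi(g-1),
\]
where the strict inequality comes from the fact that the thick part of $\partial H-\gamma$ has positive area. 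This yields $\ell(\bar{\alpha})<2\pi(g-1)$.

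For $\bar{\beta}$, the plan is to exhibit an arc $\kappa$ in the flat annulus $\partial U-A$ joining its two boundary components and of length at most $2.5$; the doubled curve then has length at most $5$, bounding the shortest representative $\bar{\beta}$ in its homology class. To organize this, I would work in the upper half-space model with the cusp at $\infty$, so that the two components of $\partial A$ lift to two parallel Euclidean lines on the horosphere $\{t=T\}$, arising as the horospherical traces of two vertical-plane lifts $P_1, P_2$ of the totally geodesic surface $\partial H-\gamma$ that contain $\infty$ on their ideal boundary. The perpendicular choice of $\kappa$ has length equal to the hyperbolic width of the horospherical strip between $P_1$ and $P_2$, so the task reduces to bounding this width uniformly by $2.5$.

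The hard part is producing such a universal constant for $\ell(\bar{\beta})$, since neither the Euclidean separation of $P_1,P_2$ nor the horosphere height $T$ is individually under control. I would try to obtain the bound via an explicit geometric construction of a short crossing arc: the two lifts $P_1,P_2$ correspond to the two ends of $\partial H-\gamma$ at the removed curve $\gamma$, and I would use the local hyperbolic structure of the cusp together with pared acylindricity to exhibit a controlled path across $\partial U-A$. The explicit constants should then come from the Margulis constant $\varepsilon=\ln 3$ (respectively $\varepsilon=2\sinh^{-1}(\sqrt[4]{2}/4)$ for $g=2$), using a horospherical estimate comparable in spirit to Adams-type waist bounds, and it is this extraction of a universal constant independent of $g$ and of the choice of pared acylindrical curve that I expect to be the main obstacle.
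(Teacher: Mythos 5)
Your argument for the bound on $\bar\alpha$ matches the paper's essentially verbatim: Gauss--Bonnet gives $\operatorname{Area}(\partial H-\gamma)=4\pi(g-1)$, each of the two cusps $A_1,A_2$ has area equal to its boundary length $\ell(\bar\alpha)$, and positivity of the remaining thick part gives $2\ell(\bar\alpha)<4\pi(g-1)$.

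The bound on $\bar\beta$, however, is where you stop short and where your proposed direction diverges from what actually works. Your reduction is correct: it suffices to find an arc crossing the flat annulus $\partial U-A$ of length at most $2.5$. But you attribute the eventual constant to the Margulis constant and ``Adams-type waist bounds.'' That is a red herring: in the paper, the Margulis constant enters only via the \emph{lower} bound $r_\ep$ on the injectivity radius of $\partial\mathbb{T}(\gamma)$ (used in the area estimate for the normalized-length computation, not in Proposition~\ref{prop}), and Adams-type results give \emph{lower} bounds on slope lengths, not the \emph{upper} bound you need here. There is no reason to expect those inputs to produce a universal upper bound on the width of the annulus $\partial U-A$.

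The missing idea is topological: use a compressing disk of the handlebody. Pick an essential simple closed curve $\delta\subset\partial H$ bounding a disk $\Delta\subset H$, in minimal position with $\gamma$; pared acylindricity forces $\delta\cap\gamma\neq\emptyset$. Homotope the arcs of $\delta-\gamma$ to complete geodesics of $\partial H-\gamma$ and lift the resulting ideal polygon $\Delta$ to $\mathbb{H}^3$. Its boundary alternates between geodesics running into successive horoballs (the lifts of the cusp) and horospherical segments $h_j$ across them, and these $h_j$ are precisely arcs of the type $\kappa$ whose doubles represent $\beta$. The paper's Lemma~\ref{lem:2.5} --- a ping-pong/nested-horoball argument in $\mathbb{H}^3$, resting on the elementary facts that a horoball disjoint from the unit-height horosphere projects to a disk of radius $\le 1/2$ and that two points whose shadows are $\ge 2$ apart are separated by that horosphere --- shows that if \emph{every} such horospherical segment had length $>2.5$, the boundary path could never close up. Since $\partial\Delta$ is a loop, some $h_j$ has length $\le 2.5$, and doubling gives $\ell(\bar\beta)\le 5$. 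Without this disk-and-ideal-polygon construction (or a substitute), your outline for $\bar\beta$ has a genuine gap that the Margulis-constant heuristic will not fill.
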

Using Proposition~\ref{prop}, which we prove in the next section, we prove Theorem~\ref{thm}. 

\begin{proof}[Proof of Theorem~\ref{thm}]
Set $t\coloneqq\frac{a}{b}$, so that~\eqref{eq:bound} reads $L^2=n^2t+t^{-1}$, where $L\coloneqq L(\mu_n)$. By Proposition~\ref{prop} and the waist size lower bound $a,b\geq 2r$ from Subsection~\ref{subsec:the_cusp}, we have
\[
\frac{r}{2}=\frac{2r}{4}\leq t\leq\frac{6(g-1)}{2r}=\frac{3(g-1)}{r}.
\]
Note that the function $t\mapsto n^2t+t^{-1}$ is increasing for $t\geq\frac{1}{n}$, and that $\frac{1}{n}\leq\frac{1}{14}<\frac{r}{2}$.

We first bound $L^2$ from below. By monotonicity and $t\geq\frac{r}{2}$, we have
\[
L^2\geq n^2\frac{r}{2}+\frac{2}{r}\geq 14^2\cdot\frac{r}{2}+\frac{2}{r}>61.6>7.823^2,
\]
hence, Theorem~\ref{thm:filling} applies. Combined with $\ell_{M_{\tau_\gamma^n}}(\gamma)<\frac{2\pi}{L^2-28.78}$ from Theorem~\ref{thm:filling}, we have
\[
\ell_{M_{\tau_\gamma^n}}(\gamma)<\frac{2\pi}{n^2\frac{r}{2}+\frac{2}{r}-28.78}.
\]

Secondly, we bound $L^2$ from above to obtain the lower bound for $\ell_{M_{\tau_\gamma^n}}(\gamma)$. By monotonicity and $t\leq\frac{3(g-1)}{r}$, we have
\[
L^2\leq n^2\frac{3(g-1)}{r}+\frac{r}{3(g-1)}\leq n^2\left(\frac{3(g-1)}{r}+\frac{r}{3}\right).
\]
Together with $\frac{2\pi}{L^2+16.17}<\ell_{M_{\tau_\gamma^n}}(\gamma)$ from Theorem~\ref{thm:filling}, we conclude
\[
\ell_{M_{\tau_\gamma^n}}(\gamma)>\frac{2\pi}{n^2\left(\frac{3(g-1)}{r}+\frac{r}{3}\right)+16.17},
\]
as desired.
\end{proof}

\section{The proof of Proposition~\ref{prop}}\label{sec:proofprop}

We use the following elementary fact about 3-dimensional hyperbolic space.

\begin{lem}\label{lem:2.5}
 Consider a path $\omega$ in $\mathbb H^3$ consisting of a concatenation $\lambda_0\star\nu_0\star\dots\star\lambda_N\star\nu_N$ and horospheres $\mc{H}_0,\dots, \mc{H}_{N+1}$ where
 \begin{itemize}
  \item $\lambda_i$ is a geodesic on the horosphere $\mc{H}_i$ of length $>2$, where the horoballs bounded by $\mc{H}_i$ and $\mc{H}_{i+1}$ have disjoint interiors, and
  \item $\nu_i$ is a geodesic segment that meets $\mc{H}_i$ and $\mc{H}_{i+1}$ orthogonally (a single point if the two horospheres are tangent).
 \end{itemize}
Then $\omega$ is not a loop.
\end{lem}
In a first version of the article, we established this lemma with the constant $2.5$ in place of $2$. We are thankful to an anonymous referee for pointing out that the argument can be improved to yield the constant $2$, using a lemma of Adams~\cite{Adams95}.

In fact, the constant $2$ is optimal, as the following example shows. In the original version of this paper, the example below was off by a factor of 2 (compare the last line in the example), an error that was spotted by Claude AI~\cite{claude2026}.

\begin{ex}\label{ex:optimal}
We work in the upper half space model $\mb{H}^3\subset \mb{C}\times \mb{R}$, where the boundary is understood to be $\mb{C}\times\{0\}\cup\infty$. We first record the following elementary fact. Let $\mc{H}$ be the horosphere of Euclidean diameter $\eta$ with ideal point $u\in\mb{C}$, and let $v,w\in\mb{C}$ be two further ideal points. Then the feet on $\mc{H}$ of the geodesics from $u$ to $v$ and from $u$ to $w$ are at distance
\[\eta\,\frac{|v-w|}{|v-u|\,|w-u|}\]
in the intrinsic (flat) metric of $\mc{H}$. Indeed, after translating $u$ to $0$, the inversion in the unit sphere centered at $0$ is an isometry of $\mb{H}^3$ mapping $\mc{H}$ to the horizontal plane $\mb{C}\times\{1/\eta\}$ and the two geodesics to the vertical lines at $1/\bar{v}$ and $1/\bar{w}$; the claim follows since $\left|\frac{1}{\bar v}-\frac{1}{\bar w}\right|=\frac{|v-w|}{|v||w|}$ and the intrinsic metric of $\mb{C}\times\{1/\eta\}$ is the Euclidean one rescaled by $\eta$.

Now fix $m\geq 3$, let $v_k\coloneqq e^{2\pi ik/m}$ for $k=0,\dots,m-1$ be the vertices of a regular ideal $m$-gon, and center at each $v_k$ a horosphere $\mc{H}_k$ of Euclidean diameter $\eta\coloneqq 2\sin(\pi/m)$, so that consecutive horospheres are tangent. On each $\mc{H}_k$, connect the two tangency points with the neighboring horospheres by a horospherical geodesic $\lambda_k$. This yields a closed concatenation as in Lemma~\ref{lem:2.5}, degenerate in that the $\nu_k$ are points. Since $|v_{k\pm1}-v_k|=2\sin(\pi/m)$ and $|v_{k+1}-v_{k-1}|=2\sin(2\pi/m)$, the fact above gives
\[{\rm Length}(\lambda_k)=2\sin(\pi/m)\cdot\frac{2\sin(2\pi/m)}{\left(2\sin(\pi/m)\right)^2}=\frac{\sin(2\pi/m)}{\sin(\pi/m)}=2\cos(\pi/m).\]
Since $2\cos(\pi/m)\to 2$ as $m\to\infty$, for every $c<2$ there is a concatenation forming a loop in which all horospherical pieces have length greater than $c$. Hence the constant $2$ in Lemma~\ref{lem:2.5} cannot be replaced by any smaller constant. (For $m=3$, the three mutually tangent horoballs centered at the vertices of an ideal triangle give sides of length exactly $1$.)
\end{ex}
\begin{proof}[Proof of Lemma~\ref{lem:2.5}]
First note that $N\geq 1$ if $\omega$ is a loop: for $N=0$, the segment
$\nu_0$ would meet $\mc{H}_0=\mc{H}_1$ orthogonally at both endpoints,
forcing both of its ideal endpoints to be the center of $\mc{H}_0$,
which is absurd.
If $\omega$ were a loop then it would be a bracelet in the sense of \cite{Adams95}, and we would have $\mc{H}_{N+1}=\mc{H}_0$. Fix an identification of $\mathbb H^3$ with the upper-half space model such that the horoball bounded by $\mc{H}_0$ is identified with $\{(z,t):t\geq 1\}$. Consider the horosphere $\mc{H}_i$, $1\leq i\leq N$, with the smallest Euclidean diameter. If $i\neq 1$ then, since $\mc{H}_{i-1}$ has diameter at least that of $\mc{H}_i$, \cite[Lemma 4.3]{Adams95} says that the starting point of $\lambda_i$ lies within distance $1$ of the north pole of $\mc{H}_i$ (the boundary case of tangent horoballs follows by continuity). This is still true if $i=1$ since in that case the starting point of $\lambda_1$ even coincides with the north pole. Similarly, the final point of $\lambda_i$ lies within distance $1$ of the north pole of $\mc{H}_i$. Therefore, $\lambda_i$ has length at most $2$, contradicting the assumption that every $\lambda_i$ has length $>2$.
\end{proof}

\begin{proof}[Proof of Proposition~\ref{prop}]
We first prove the upper bound for $a\coloneqq{\rm Length}(\bar{\alpha})$. By the Gauss--Bonnet theorem, the hyperbolic surface $\partial H -\gamma$ has area $-2\pi\chi(\partial H -\gamma)=4\pi(g-1)$.

We have that $A\subset\partial H -\gamma$ is the union of two disjoint embedded cusp neighborhoods, each with boundary of length $a$. Since the area of a cusp neighborhood of a hyperbolic surface is equal to the length of its boundary (as can e.g.~be checked by an explicit calculation in the upper-half plane model), we have ${\rm Area}(A)=2a$. By the packing estimate of B\"or\"oczky~\cite{Boroczky}, the density of a union of disjoint embedded cusp neighborhoods in a hyperbolic surface is at most $\frac{3}{\pi}$. Therefore,
\[2a={\rm Area}(A)\leq\frac{3}{\pi}\cdot{\rm Area}(\partial H-\gamma)=\frac{3}{\pi}\cdot 4\pi(g-1)=12(g-1),\]
and so $a\leq 6 (g-1)$.

Next we apply Lemma~\ref{lem:2.5} to get an upper bound for ${\rm Length}(\bar{\beta})$.
Consider an (arbitrary) essential simple closed curve $\delta\subset\partial H$ bounding a properly embedded disk $\Delta\subset H$. By applying an isotopy if necessary, we can and do assume that $\delta$ is in minimal position with respect to $\gamma$ (note that $\delta$ must intersect $\gamma$ because of the pared acylindrical assumption). We write $\delta$ as a concatenation
\[
\delta=\delta_1\star\dots\star\delta_v,\]
where each $\delta_j$ is the closure of a component of $\delta-\gamma$. We label the intersections $\delta\cap\gamma=\{p_1,\dots,p_v\}$ such that $\delta_j$ is the arc joining $p_j$ to $p_{j+1}$ (indices modulo $v$).

Each arc $\delta_j$ is properly homotopic within $\partial H-\gamma$ to a bi-infinite complete geodesic $\bar{\delta}_j$. By the homotopy extension property, we can extend the homotopy on $\partial\Delta-\{p_1,\dots,p_v\}$ to a proper homotopy of the whole (punctured) disk $\Delta-\{p_1,\dots,p_v\}$. Properness of the map $\Delta-\{p_1,\dots,p_v\}\to\mb{M}_\gamma$ implies that every vertex $p_j$ has a small neighborhood $W_j$ that maps to $\mb{T}(\gamma)$. 

We lift the resulting ideal disk with totally geodesic boundary to $\mb{H}^3$ and, by a slight abuse of notation, we will keep denoting its side by $\bar{\delta}_j$. (We caution the reader that this lifted ideal disk is most likely not planar: each ideal edge of its boundary is a geodesic, but the union of those edges need not lie in a single isometric copy of $\mb{H}^2$. Pleated surfaces of this kind, such as the double of this disk, play a big role in the proof of the $6$-theorem; see Remark~\ref{rmk:6thm}.) Note that each $\bar{\delta}_j$ joins the centers $\xi_j,\xi_j'$ of different horoball components $\mc{O}_j,\mc{O}_j'$ of the pre-image of the cusp $\mb{T}(\gamma)$ in the universal cover $\mb{H}^3$. Note also that we must have $\xi_j'=\xi_{j+1}$ (indices modulo $v$) since every small neighborhood $W_j$ is mapped to the same horoball.

For each side $\bar{\delta}_j$ consider the horospheres $\mc{H}_j=\partial\mc{O}_j$ and $\mc{H}_j'=\partial\mc{O}_j'$ centered at $\xi_j$ and $\xi_j'$. Let $x_j,x'_j$ be the intersections of $\mc{H}_j,\mc{H}_j'$ with $\bar{\delta}_j$. 

As $x_j',x_{j+1}$ lie on the same horosphere $\mc{H}_j'=\mc{H}_{j+1}$ (indices modulo $v$), they are connected by a horospherical geodesic $h_j$. Observe that the projection of $h_j$ to $\mb{M}_\gamma$ lies on $\partial U-A$, so the double of the arc represents the homology class of $\beta$. 

In order to conclude, it is enough to bound the length of at least one of the $h_j$. Indeed, since $\beta$ is freely homotopic to the double of any arc $h_j$, we have that ${\rm Length}(\bar{\beta})$ is less than or equal to twice the length of $h_j$ for all $1\leq j\leq v$.

The path
$
[x_1,x_1']\star h_1\star[x_2,x_2']\star h_2\star\dots\star[x_v,x_v']\star h_v
$
is closed, each segment $[x_j,x_j']$ lies on the geodesic $\bar{\delta}_j$, which meets the horospheres $\mc{H}_j$ and $\mc{H}_j'$ orthogonally since it limits to their centers, and the horoballs $\mc{O}_j$ and $\mc{O}_j'$ have disjoint interiors, being distinct components of the pre-image of the embedded cusp $\mb{T}(\gamma)$. Hence Lemma~\ref{lem:2.5} implies that there is a $j$ such that the flat arc $h_j$ has length at most $2$. Thus, ${\rm Length}(\bar{\beta})$ is at most~$4$.
\end{proof}

\begin{rmk}\label{rmk:6thm}
The disk $\Delta\subset H$ appearing in the proof above has the property that the double of $\Delta$ in the double of $H$ is an essential $2$-sphere; hence the double of $\Delta-\gamma$ in $\mb{M}_\gamma$ is an essential punctured sphere. Studying punctured spheres and punctured tori of this type is how the $6$-theorem of Agol~\cite{Agol_00} and Lackenby~\cite{Lackenby_00} is proved. Put differently: since the double of $H$ is reducible, the $6$-theorem immediately implies ${\rm Length}(\bar{\beta})\leq 6$, and the improvement to $4$ obtained above can be attributed to the reflective symmetry of $\mb{M}_\gamma$. See also Hoffman--Purcell~\cite{HoffmanPurcell_17} for work on planar surfaces and potential improvements to the $6$-theorem for reducible fillings.
\end{rmk}

\bibliographystyle{alpha}
\bibliography{bib_arxiv}

@article {Hempel:Heegaard,
    AUTHOR = {Hempel, J.},
     TITLE = {3-manifolds as viewed from the curve complex},
   JOURNAL = {Topology},
  FJOURNAL = {Topology. An International Journal of Mathematics},
    VOLUME = {{\bf 40}},
      YEAR = {2001},
    NUMBER = {3},
     PAGES = {631--657},
      ISSN = {0040-9383},
     CODEN = {TPLGAF},
   MRCLASS = {57N10},
  MRNUMBER = {1838999},
MRREVIEWER = {Andrei Yu. Vesnin},
       DOI = {10.1016/S0040-9383(00)00033-1},
       URL = {http://dx.doi.org/10.1016/S0040-9383(00)00033-1},
}

@article {twist-distance,
    AUTHOR = {Yoshizawa, M.},
     TITLE = {High distance {H}eegaard splittings via {D}ehn twists},
   JOURNAL = {Algebr. Geom. Topol.},
  FJOURNAL = {Algebraic \& Geometric Topology},
    VOLUME = {14},
      YEAR = {2014},
    NUMBER = {2},
     PAGES = {979--1004},
      ISSN = {1472-2747},
   MRCLASS = {57M50},
  MRNUMBER = {3180825},
MRREVIEWER = {Jungsoo Kim},
       DOI = {10.2140/agt.2014.14.979},
       URL = {https://doi.org/10.2140/agt.2014.14.979},
}

@book {Na05,
    AUTHOR = {Namazi, H.},
     TITLE = {Heegaard splittings and hyperbolic geometry},
      NOTE = {Thesis (Ph.D.)--State University of New York at Stony Brook},
 PUBLISHER = {ProQuest LLC, Ann Arbor, MI},
      YEAR = {2005},
     PAGES = {187},
      ISBN = {978-0542-20270-4},
   MRCLASS = {Thesis},
  MRNUMBER = {2707466},
       URL =
              {http://gateway.proquest.com/openurl?url_ver=Z39.88-2004&rft_val_fmt=info:ofi/fmt:kev:mtx:dissertation&res_dat=xri:pqdiss&rft_dat=xri:pqdiss:3179646},
}

@article {NS09,
    AUTHOR = {Namazi, H. and Souto, J.},
     TITLE = {Heegaard splittings and pseudo-{A}nosov maps},
   JOURNAL = {Geom. Funct. Anal.},
  FJOURNAL = {Geometric and Functional Analysis},
    VOLUME = {{\bf 19}},
      YEAR = {2009},
    NUMBER = {4},
     PAGES = {1195--1228},
      ISSN = {1016-443X},
   MRCLASS = {57M50 (57N10)},
  MRNUMBER = {2570321},
MRREVIEWER = {Bruno P. Zimmermann},
       DOI = {10.1007/s00039-009-0025-3},
       URL = {https://doi.org/10.1007/s00039-009-0025-3},
}

@article{FSVa,
 author = {Feller, Peter and Sisto, Alessandro and Viaggi, Gabriele},
 title = {Uniform models and short curves for random 3-manifolds},
 fjournal = {Compositio Mathematica},
 journal = {Compos. Math.},
 issn = {0010-437X},
 volume = {161},
 number = {3},
 pages = {447--502},
 year = {2025},
 language = {English},
 doi = {10.1112/S0010437X24007565},
 keywords = {57K30,57K32,30F60,20P05},
 zbMATH = {8070176},
 Zbl = {1577.57020},
 NOTE = {ArXiv:1910.09486 [math.GT].}
}

@article{HV,
AUTHOR={Hamenst\"adt, U. and Viaggi, G.},
JOURNAL={ArXiv e-print},
 TITLE={Small eigenvalues of random 3-manifolds},
YEAR={2019},
 NOTE = {ArXiv:1903.08031 [math.GT]},
}

@article {HJ22,
    AUTHOR = {Hamenstädt, U. and J\"ackel, F.},
    TITLE= {Stability of Einstein metrics and effective hyperbolization in large Hempel distance},
	JOURNAL={ArXiv e-print},
       NOTE = {ArXiv:2206.10438 [math.GT].},
}

@article {FSVb,
    AUTHOR = {Feller, P. and Sisto, A. and Viaggi, G. },
    TITLE= {Effective hyperbolization and length bounds for Heegaard splittings},
      JOURNAL={ArXiv e-print},
       NOTE = {ArXiv:2408.06998 [math.GT].},
}

@incollection {Le05,
    AUTHOR = {Lecuire, C.},
     TITLE = {An extension of the {M}asur domain},
 BOOKTITLE = {Spaces of {K}leinian groups},
    SERIES = {London Math. Soc. Lecture Note Ser.},
    VOLUME = {329},
     PAGES = {49--73},
 PUBLISHER = {Cambridge Univ. Press, Cambridge},
      YEAR = {2006},
   MRCLASS = {57N10 (30F40 57M50)},
  MRNUMBER = {2258744},
MRREVIEWER = {Ken-ichi Ohshika},
}

@article{MasurMinsky:I,
  title={Geometry of the complex of curves {I}: {H}yperbolicity},
  author={Masur, H. and Minsky, Y.},
  journal={Invent. Math.},
  volume={{\bf 138}},
  number={1},
  pages={103--149},
  year={1999},
  publisher={Springer}
}

@article{MasurMinsky:II,
  title={Geometry of the complex of curves {II}: {H}ierarchical structure},
  author={Masur, H. and Minsky, Y.},
  journal={Geom. Funct. Anal.},
  volume={{\bf 10}},
  number={4},
  pages={902--974},
  year={2000},
  publisher={Springer}
}

@article {M10,
    AUTHOR = {Minsky, Y.},
     TITLE = {The classification of {K}leinian surface groups {I}: {M}odels
              and bounds},
   JOURNAL = {Ann. of Math.},
  FJOURNAL = {Annals of Mathematics. Second Series},
    VOLUME = {{\bf 171}},
      YEAR = {2010},
    NUMBER = {1},
     PAGES = {1--107},
      ISSN = {0003-486X},
   MRCLASS = {30F40 (20H10 57M50)},
  MRNUMBER = {2630036},
MRREVIEWER = {Athanase Papadopoulos},
       DOI = {10.4007/annals.2010.171.1},
       URL = {https://doi.org/10.4007/annals.2010.171.1},
}

@article {BCM,
    AUTHOR = {Brock, J. and Canary, R. and Minsky, Y.},
     TITLE = {The classification of {K}leinian surface groups, {II}: {T}he
              ending lamination conjecture},
   JOURNAL = {Ann. of Math. (2)},
  FJOURNAL = {Annals of Mathematics. Second Series},
    VOLUME = {{\bf 176}},
      YEAR = {2012},
    NUMBER = {1},
     PAGES = {1--149},
      ISSN = {0003-486X},
   MRCLASS = {57M50 (30F40)},
  MRNUMBER = {2925381},
MRREVIEWER = {Athanase Papadopoulos},
       DOI = {10.4007/annals.2012.176.1.1},
       URL = {https://doi.org/10.4007/annals.2012.176.1.1},
}

@article {HK:universal,
    AUTHOR = {Hodgson, C. and Kerckhoff, S.},
     TITLE = {Universal bounds for hyperbolic {D}ehn surgery},
   JOURNAL = {Ann. of Math.},
  FJOURNAL = {Annals of Mathematics. Second Series},
    VOLUME = {{\bf 162}},
      YEAR = {2005},
    NUMBER = {1},
     PAGES = {367--421},
      ISSN = {0003-486X},
   MRCLASS = {57M50 (57N10)},
  MRNUMBER = {2178964},
MRREVIEWER = {Joan Porti},
       DOI = {10.4007/annals.2005.162.367},
       URL = {https://doi.org/10.4007/annals.2005.162.367},
}

@article {FPS22,
    AUTHOR = {Futer, D. and Purcell, J. and Schleimer, S.},
     TITLE = {Effective bilipschitz bounds on drilling and filling},
   JOURNAL = {Geom. Topol.},
  FJOURNAL = {Geometry \& Topology},
    VOLUME = {{\bf 26}},
      YEAR = {2022},
    NUMBER = {3},
     PAGES = {1077--1188},
      ISSN = {1465-3060},
   MRCLASS = {57K10 (30F40 57K32)},
  MRNUMBER = {4466646},
       DOI = {10.2140/gt.2022.26.1077},
       URL = {https://doi.org/10.2140/gt.2022.26.1077},
}

@article {Adams_19_WaistSize,
    AUTHOR = {Adams, C.},
     TITLE = {Waist size for cusps in hyperbolic 3-manifolds {II}},
   JOURNAL = {Geom. Dedicata},
  FJOURNAL = {Geometriae Dedicata},
    VOLUME = {203},
      YEAR = {2019},
     PAGES = {53--66},
      ISSN = {0046-5755},
   MRCLASS = {57M50},
  MRNUMBER = {4027583},
MRREVIEWER = {Thilo Kuessner},
       DOI = {10.1007/s10711-019-00425-5},
       URL = {https://doi.org/10.1007/s10711-019-00425-5},
}

@article{Boroczky,
  title={Packing of spheres in spaces of constant curvature},
  author={B{\"o}r{\"o}czky, K{\'a}roly},
  journal={Acta Mathematica Academiae Scientiarum Hungarica},
  volume={32},
  pages={243--261},
  year={1978},
  publisher={Springer}
}

@misc{claude2026,
  author       = {{Anthropic}},
  title        = {{C}laude},
  year         = {2026},
  howpublished = {\url{https://claude.ai}},
  note         = {Large Language Model}
}

@article {Thu86,
    AUTHOR = {Thurston, W.},
     TITLE = {Hyperbolic structures on 3-manifolds, {I}: {D}eformation
              of acylindrical manifolds},
   JOURNAL = {Ann. of Math.},
  FJOURNAL = {Annals of Mathematics. Second Series},
    VOLUME = {{\bf 124}},
      YEAR = {1986},
    NUMBER = {2},
     PAGES = {203--246},
      ISSN = {0003-486X},
   MRCLASS = {57N10},
  MRNUMBER = {855294},
MRREVIEWER = {G. Peter Scott},
       DOI = {10.2307/1971277},
       URL = {https://doi.org/10.2307/1971277},
}

@book {Kapovich:hyperbolization,
    AUTHOR = {Kapovich, M.},
     TITLE = {Hyperbolic manifolds and discrete groups},
    SERIES = {Modern Birkh\"{a}user Classics},
      NOTE = {Reprint of the 2001 edition},
 PUBLISHER = {Birkh\"{a}user Boston, Inc., Boston, MA},
      YEAR = {2009},
     PAGES = {xxviii+467},
      ISBN = {978-0-8176-4912-8},
   MRCLASS = {57M50 (20F65 20H10 30F40 30F45 30F60 32G15)},
  MRNUMBER = {2553578},
       DOI = {10.1007/978-0-8176-4913-5},
       URL = {https://doi.org/10.1007/978-0-8176-4913-5},
}

@article {BonahonOtal_04,
    AUTHOR = {Bonahon, F. and Otal, J.-P.},
     TITLE = {Laminations measur\'{e}es de plissage des vari\'{e}t\'{e}s hyperboliques
              de dimension 3},
   JOURNAL = {Ann. of Math. (2)},
  FJOURNAL = {Annals of Mathematics. Second Series},
    VOLUME = {{\bf 160}},
      YEAR = {2004},
    NUMBER = {3},
     PAGES = {1013--1055},
      ISSN = {0003-486X},
   MRCLASS = {57M50 (57N10 57R30)},
  MRNUMBER = {2144972},
MRREVIEWER = {Thilo Kuessner},
       DOI = {10.4007/annals.2004.160.1013},
       URL = {https://doi.org/10.4007/annals.2004.160.1013},
}

@article {lickorish62,
    AUTHOR = {Lickorish, W. B. R.},
     TITLE = {A representation of orientable combinatorial {$3$}-manifolds},
   JOURNAL = {Ann. of Math. (2)},
  FJOURNAL = {Annals of Mathematics. Second Series},
    VOLUME = {76},
      YEAR = {1962},
     PAGES = {531--540},
      ISSN = {0003-486X},
   MRCLASS = {54.78},
  MRNUMBER = {151948},
MRREVIEWER = {A. D. Wallace},
       DOI = {10.2307/1970373},
       URL = {https://doi.org/10.2307/1970373},
}

@article{Adams95,
 author = {Adams, Colin},
 title = {Unknotting tunnels in hyperbolic 3-manifolds},
 fjournal = {Mathematische Annalen},
 journal = {Math. Ann.},
 issn = {0025-5831},
 volume = {302},
 number = {1},
 pages = {177--195},
 year = {1995},
 language = {English},
 doi = {10.1007/BF01444492},
 keywords = {57M50,57M25},
 url = {https://eudml.org/doc/165328},
 zbMATH = {769474},
 Zbl = {0830.57009}
}

@article{Agol_00,
 author = {Agol, Ian},
 title = {Bounds on exceptional {Dehn} filling},
 fjournal = {Geometry \& Topology},
 journal = {Geom. Topol.},
 issn = {1465-3060},
 volume = {4},
 pages = {431--449},
 year = {2000},
 language = {English},
 doi = {10.2140/gt.2000.4.431},
 keywords = {57M50,57M25},
 url = {https://eudml.org/doc/120912},
 zbMATH = {1533629},
 Zbl = {0959.57009}
}

@article{Lackenby_00,
 author = {Lackenby, Marc},
 title = {Word hyperbolic {Dehn} surgery},
 fjournal = {Inventiones Mathematicae},
 journal = {Invent. Math.},
 issn = {0020-9910},
 volume = {140},
 number = {2},
 pages = {243--282},
 year = {2000},
 language = {English},
 doi = {10.1007/s002220000047},
 keywords = {57N10,57M99},
 zbMATH = {1463429},
 Zbl = {0947.57016}
}

@article{HoffmanPurcell_17,
 author = {Hoffman, Neil R. and Purcell, Jessica S.},
 title = {Geometry of planar surfaces and exceptional fillings},
 fjournal = {Bulletin of the London Mathematical Society},
 journal = {Bull. Lond. Math. Soc.},
 issn = {0024-6093},
 volume = {49},
 number = {2},
 pages = {185--201},
 year = {2017},
 language = {English},
 doi = {10.1112/blms.12000},
 keywords = {57M50,57M27},
 zbMATH = {6774543},
 Zbl = {1378.57026}
}

\end{document}